\newtheorem{theorem}{Theorem}[section]
\newtheorem{definition}[theorem]{Definition}
\newtheorem{corollary}[theorem]{Corollary}
\newtheorem{remark}[theorem]{Remark}
\newtheorem{proof}[theorem]{Proof}
\numberwithin{equation}{section}
\begin{document}

\title{\textbf{Geometry of left invariant Randers metric on the Heisenberg group}}
\author{{Ghodratallah Fasihi Ramandi$\sp{a}$\thanks{fasihi@sci.ikiu.ac.ir (Gh. Fasihi-Ramandi)}\qquad
{Shahroud Azami}$\sp{a}$\thanks{azami@sci.ikiu.ac.ir(S. Azami)}}\\
$\sp{a}${\small \textit{Department of Mathematics, Faculty of Science, Imam Khomeini International University, Qazvin, Iran}}}
\date{}
\maketitle
\begin{abstract}
In this paper, we investigate the geometry of left-invariant Randers metrics on the Heisenberg group $H_{2n+1}$, of dimension $2n+1$. Considering a left-invariant Randers metric, we give the Levi-Civita connection, curvature tensor, Ricci tensor and scalar curvature and show  the Heisenberg groups $H_{2n+1}$  have constant negative scalar curvature. Also, we show the Heisenberg group $H_{2n+1}$ can not admit Randers metric of Berwald and Ricci-quadratic Douglas types. Finally, an explicit formula for computing flag curvature is obtained which shows that there exist flags of strictly negative and strictly positive curvatures. 
\end{abstract}

\hspace{-1cm}\rule{\textwidth}{0.2mm}
\section{Introduction}
Research into left-invariant Riemannian metrics on Lie groups is an active subject of research and this topic is mentioned among
many author's works so far. The curvature properties of such metrics on various kinds of Lie groups are mainly investigated in classical works of Milnor (see \cite{Milnor}). \\
Randers metrics as a special case of Finsler metrics are constructed by Reimannian metrics and vector fields. Similar to the Riemannian case, the notion of left-invariant Randers metrics on a Lie group $G$ is defined, and the geometry of such spaces is part of many author's interest topic. A general study of Berwald-type Randers metric on two-step homogeneous nilmanifolds of dimension five is done in \cite{salimi}. Also, curvature properties of Douglas-type Randers metrics on five dimensional two-step homogeneous nilmanifolds can be found in \cite{Nasehi}.\\

The Heisenberg groups play a crucial role in theoretical physics, and they are well understood from the viewpoint of sub-Riemannain geometry. These groups arise in the description of one-dimensional quantum mechanical systems. More generally, one can consider Heisenberg groups associated to $n$-dimensional systems, and most generally, to any symplectic vector space. \\

In this study, we develop the results of \cite{Kovacs} for a special case of five-dimensional Heisenberg group by investigating the geometry of left-invariant Randers metrics on the Heisenberg group $H_{2n+1}$, of dimension $2n+1$. Considering a left-invariant Randers metric, we give the Levi-Civita connection, curvature tensor, Ricci tensor and scalar curvature and show  the Heisenberg groups $H_{2n+1}$  have constant negative scalar curvature. Also, we show the Heisenberg group $H_{2n+1}$ can not admit Randers metric of Berwald and Ricci-quadratic Douglas types. Finally, an explicit formula for computing flag curvature of $Z$-Randers metrics is obtained.
\section{Preliminaries}
In this section, we summarize the main concepts and definitions that are needed in this paper.
\begin{definition}\label{inv}
A (semi-)Riemannian metric $g$ on a Lie group $G$  is said to be left-invariant if $L_a^* (g)=g$, for all $a\in G$.
\end{definition}

It is well-known that there is a bijective correspondence between left-invariant metrics on a Lie group $G$, and inner products on its associated Lie algebra $\mathfrak{g}=T_e G$. So, the geometry of a left-invariant metric on a Lie group $G$ can be recovered from the geometry of its associated inner product space $\mathfrak{g}=T_e G$. For instance, let $G$ be a Lie group with a left-invariant metric $g$ then the Koszul formula on $U,V,W\in \mathfrak{g}$ is given by
\[2\langle \nabla_U V, W\rangle=\langle[U,V],W\rangle-\langle[V,W],U\rangle+\langle[W,U],V\rangle,\]
where, $\langle.,.\rangle$ denotes the induced inner product on $\mathfrak{g}$ by $g$. \\

The rest of this section is devoted to remind some basic notions on Finsler geometry and in particular to developing the  definition \ref{inv} for Finsler manifolds.

A Finsler metric on a manifold $M$ is a function $F:TM\to [0,\infty)$ with the following properties:
(i){\bf Regularity:} $F$ is smooth on the entire slit tangent bundle $TM\backslash \{0\}$.\\
(ii){\bf Positive homogeneity:} $F(x,\lambda y)=\lambda F(x,y)$ for all $\lambda>0$.\\
(iii){\bf Strong convexity:} The $n\times n$ Hessian matrix
\[ [g_{ij}]=[\dfrac{1}{2}\dfrac{\partial^2 F^2}{\partial y^i \partial y^j}] \]
is positive definite at every point $(x,y)\in TM\backslash \{0\}$.\\

The Finsler geometry counterpart of the Riemannian sectional curvature is the notion of flag curvature. The flag curvature is defined as follows:
\[K(P,Y)=\dfrac{g_Y (R(X,Y)Y,X)}{g_Y (Y,Y)g_Y (X,X) -g_Y^2 (Y,X)}\]
where,
\[g_Y(X,Z)=\dfrac{1}{2}\dfrac{\partial^2}{\partial s \partial t}|_{s=t=0} F^2(Y+sX+tZ),\] 
is the osculating Riemannain metric, $P=span \{X,Y\}$, $R(X,Y)(Z)=\nabla_X \nabla_Y Z -\nabla_Y \nabla_X Z-\nabla_{[X,Y]} Z$ and $\nabla$ is the Chern-Rund connection induced by $F$ on the pull-back bundle $\pi^* TM$ (see \cite{Bao}).\\

A special case of Finsler metrics are Randers metrics which are constructed by Riemannaian metrics and vector fields (1-forms). In fact, for a Riemannian metric $g$ and a vector field $X$ on $M$ such that $\sqrt{g(X,X)}<1$, the Randers metric $F$, defined by $g$ and $X$, is a Finsler metric as follows:
\begin{equation}\label{randers}
F(x,y)=\sqrt{g(y,y)}+g(X(x),y) \qquad \forall x\in M,\quad y\in T_x M.
\end{equation}

A Randers metric of the form (\ref{randers}) is called Berwald type if and only if the vector field $X$ is parallel with respect to Levi-Civita connection of $g$. It is well-known that in the such metrics the Chern connection of Randers metric $F$ coincide with the Levi-civita connection of $g$. Also, if for a Randers metric of the form (\ref{randers}) the 1-form $g(X,.)$ is closed, then the Randers metric is said to be of Douglas type. In the case that $M=G$ is a Lie group, one can easily check that a Randers metric on $G$ with underlying left-invariant Riemannian metric is of Douglas type if and only if its underlying vector field $Q$ satisfies the following equation
\[<[U,V],Q>=0,\qquad \forall U,V\in \mathfrak{g}=T_eG.\]

Now, we are prepare to generalize the definition \ref{inv} for Finsler manifolds.
\begin{definition}
A Finsler metric $F$ on a Lie group $G$ is said to be left-invariant if
\[F(a,v)=F(e,L_{a^{-1}}^* (v))\qquad \forall a\in G,\quad v\in T_a G,\]
where, $e$ is the unit element of $G$.
\end{definition}
Suppose that $G$ is a Lie group, $g$ and $U$ are left-invariant Riemnnian metric and left-invariant vector field on $G$, respectively, such that $\sqrt{g(U,U)}<1$. Then the formula (\ref{randers}) defines a left-invariant Randers metric on $G$. In fact, one can easily check that there is a bijective correspondence between left-invariant Randers metrics on a Lie group $G$ with the underlying Reimannian metric $g$ and the left-invariant-vector fields with length $<1$. Therefore, the invariant Randers metrics are one-to-one corresponding to the set (see \cite{salimi}, Proposition 3.1)
\[\{ U\in \mathfrak{g}| <U,U><1 \}. \]

Suppose that $G$ is a Lie group with a left-invariant Randers metric $F$ which is defined by a $U\in \mathfrak{g}$, then the $S$-curvature is given by
\[ S(e,v)=\dfrac{n+1}{2}\Big(  {\dfrac{\langle {U,v},\langle v,U\rangle U-v\rangle}{F(e,v)}}-\langle [U,v],U\rangle \Big),
\]
and the Randers metric has vanishing $S$-curvature if and only if the linear endomorphism $ad(U)$ of $\mathfrak{g}$ is skew symmetric with respect to the inner product $\langle . ,. \rangle$ on the Lie algebra $\mathfrak{g}$. (See \cite{Deng} Proposition 7.5).
\section{Geometry of Heisenberg groups}
In this section, we investigate the Riemannian geometry of left-invariant metrics on the Heisenberg group $H_{2n+1}$, of dimension $2n+1$.\\

The Heisenberg group $H_{2n+1}$ is defined on the base manifold $\mathbb{R}^{2n} \times \mathbb{R}$ by multiplication
\[(x,\lambda).(y,\mu)=(x+y,\lambda +\mu +\omega (x,y)),\]
where, $\omega$ denotes the standard symplectic form on $\mathbb{R}^{2n}$. Its associated Lie algebra $\mathbb{H}_{2n+1}$ is
\[\mathbb{H}_{2n+1}=\mathbb{R}^{2n} \oplus \mathbb{R}=\{(x,\lambda)| x\in \mathbb{R}^{2n},\lambda\in \mathbb{R}\},\]
with the following Lie bracket
\begin{equation}\label{bracket}
[(x,\lambda),(y,\mu)]=(0,\omega(x,y)).
\end{equation} 
The center of $\mathbb{H}_{2n+1}$ is one-dimensional, hence the Heisenberg group is 2-step nilpotent. In the other hand, every 2-step nilpotent Lie group of odd dimension with a one-dimensional center is locally isomorphism to the Heisenberg group $H_{2n+1}$.\\

Following \cite{vukmir}, any positive definite inner product on $\mathbb{H}_{2n+1}$ is given by the following theorem. For $\sigma_1 \geq \cdots \sigma_n \geq 1 $ and $\sigma=(\sigma_1,\cdots ,\sigma_n)$ denote 
\[D_n(\sigma)=\mathrm{diag} (\sigma_1,\sigma_1,\cdots ,\sigma_n ,\sigma_n)\]
\begin{theorem}\cite{vukmir}\label{diag}
Any positive definite inner product on $\mathbb{H}_{2n+1}$, up to the automorphism of Lie algebra (i.e., in some basis of $\mathbb{H}_{2n+1}$ such that the commutators are given by (\ref{bracket}) is represented by the matrix
\[
 \begin{pmatrix}
  D_{n-1}(\sigma) & 0 \\
   0 & S
 \end{pmatrix}\]
where $S=\mathrm{diag}(1,1,\lambda)$, $\lambda>0$.
\end{theorem}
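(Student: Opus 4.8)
Because this classification is quoted from \cite{vukmir}, the plan is to recall the standard reduction rather than to reprove everything in full. Write $Z=(0,1)$ for a generator of the one-dimensional center, so that by (\ref{bracket}) the algebra $\mathbb{H}_{2n+1}$ is the central extension of the symplectic space $(\mathbb{R}^{2n},\omega)$ determined by $Z$. The first thing I would record is the shape of $\mathrm{Aut}(\mathbb{H}_{2n+1})$: a linear map $\phi$ is an automorphism if and only if $\phi(Z)=cZ$ for some $c\neq 0$, while $\phi$ maps $\mathbb{R}^{2n}$ into $\mathbb{R}^{2n}\oplus\mathbb{R}Z$ by $v\mapsto Av+\beta(v)Z$ with $\beta\in(\mathbb{R}^{2n})^{*}$ arbitrary and $A\colon\mathbb{R}^{2n}\to\mathbb{R}^{2n}$ satisfying $\omega(Av,Aw)=c\,\omega(v,w)$. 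Thus the automorphism group offers exactly three kinds of moves to be exploited: shears $v\mapsto v+\beta(v)Z$ into the center, symplectic changes of basis of $\mathbb{R}^{2n}$ (the case $c=1$, $\beta=0$), and overall dilations $v\mapsto\sqrt{c}\,v$, $Z\mapsto cZ$. In each case one checks directly that the bracket expressed in the new basis is again of the form (\ref{bracket}).

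Now start from an arbitrary positive definite inner product $\langle\cdot,\cdot\rangle$ on $\mathbb{H}_{2n+1}$, with $\langle Z,Z\rangle=\lambda>0$ and $\langle Z,v\rangle=\gamma(v)$ for $v\in\mathbb{R}^{2n}$. \textbf{Step 1 (decouple the center).} Apply the shear $v\mapsto v-\lambda^{-1}\gamma(v)Z$; since $Z$ is fixed and $\langle Z,Z\rangle$ is unchanged, this makes $\mathbb{R}^{2n}$ orthogonal to $Z$, so the Gram matrix becomes block diagonal $g_1\oplus(\lambda)$ with $g_1$ the (still positive definite) restriction to $\mathbb{R}^{2n}$. \textbf{Step 2 (Williamson normal form).} Diagonalize $g_1$ symplectically: the $g_1$-skew-symmetric operator $J$ defined by $\omega(x,y)=g_1(Jx,y)$ has eigenvalues $\pm i\,d_1^{-1},\dots,\pm i\,d_n^{-1}$ with $d_1\ge\cdots\ge d_n>0$, and there is a symplectic basis of $\mathbb{R}^{2n}$ in which $\omega$ is unchanged and $g_1=\mathrm{diag}(d_1,d_1,\dots,d_n,d_n)$. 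This change of basis extends (with $\beta=0$, $c=1$) to an automorphism of $\mathbb{H}_{2n+1}$ and does not reintroduce central cross terms. \textbf{Step 3 (dilate).} Apply the dilation with $c=d_n^{-1}$: then $g_1\mapsto d_n^{-1}g_1$, so the diagonal entries become $\sigma_k:=d_k/d_n\ge 1$ (with $\sigma_n=1$), while $\lambda\mapsto d_n^{-2}\lambda$, still positive, which we relabel $\lambda$. The last symplectic pair now contributes the block $\mathrm{diag}(1,1)$ which, together with $\langle Z,Z\rangle=\lambda$, gives the corner $S=\mathrm{diag}(1,1,\lambda)$; the remaining $n-1$ pairs contribute $D_{n-1}(\sigma)=\mathrm{diag}(\sigma_1,\sigma_1,\dots,\sigma_{n-1},\sigma_{n-1})$. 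This is precisely the asserted form.

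The genuinely nontrivial ingredient, and the step I expect to need the most care, is Step 2: the simultaneous reduction of a positive definite form and a symplectic form, i.e.\ Williamson's theorem (equivalently, the spectral analysis of the skew operator $J$). Everything else is bookkeeping with the explicit automorphisms of the Heisenberg algebra. One should additionally remark that the resulting data are sharp: the ratios of the symplectic eigenvalues of $g_1$, together with the single scalar $\lambda$ that survives after the dilation is spent, are automorphism invariants, so the listed parameters $(\sigma_1,\dots,\sigma_{n-1},\lambda)$ really do distinguish inequivalent inner products.
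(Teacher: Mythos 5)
The paper offers no proof of this statement at all—it is quoted verbatim from \cite{vukmir}—so there is nothing internal to compare against; your sketch is a correct reconstruction of the standard argument. The three moves you isolate (a shear into the center to kill the cross terms $\langle Z,v\rangle$, Williamson's simultaneous normal form for the positive definite form and $\omega$ on $\mathbb{R}^{2n}$, and a conformal-symplectic dilation to normalize $d_n$ to $1$) are exactly the automorphisms available, and the bookkeeping you give produces the stated block form $D_{n-1}(\sigma)\oplus\mathrm{diag}(1,1,\lambda)$ with $\sigma_1\ge\cdots\ge\sigma_n=1$. The only part you assert without justification is the closing remark that $(\sigma_1,\dots,\sigma_{n-1},\lambda)$ is a complete invariant, but the theorem as stated only claims existence of the normal form, so that does not affect correctness here.
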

We have already observed above that every positive definite inner product on the Heisenberg algebra $\mathbb{H}_{2n+1}$ with the commutator specified in (\ref{bracket}) has a diagonal representation. According to theorem \ref{diag}, we calculate the geometry of a left-invariant metric on the Heisenberg group $H_{2n+1}$.\\

Let $\beta=\{u_1,v_1,\cdots ,u_n,v_n,z\}$ is a basis of $\mathbb{H}_{2n+1}$ with non-zero commutators
\[ [u_i,v_i]=z,\qquad i=1,\cdots, n, \]
which is a special form of the Lie bracket given by (\ref{bracket}). Suppose that 
\[ U_1,V_1,\cdots ,U_n,V_n, Z \]
denote the corresponding left-invariant vector fields on $H_{2n+1}$. Fix a Riemannain metric $g$ on the Heisenberg group $H_{2n+1}$ and denote $\sigma_n=1$. Then the Levi-Civita connection of $g$ is given by the following theorem.
\begin{theorem}\label{levi}
The Levi-Civita connection $\nabla$ of the left-invariant metric $g$, satisfies the following equations.
\begin{align*}
&\nabla_{U_i} V_i =\dfrac{1}{2}Z,\qquad\qquad \nabla_{V_i} U_i=-\dfrac{1}{2}Z,\\
&\nabla_{U_i} Z =-\dfrac{\lambda}{2\sigma_i}V_i,\qquad\quad \nabla_Z {U_i} =-\dfrac{\lambda}{2\sigma_i}V_i,\\
&\nabla_{V_i} Z =\dfrac{\lambda}{2\sigma_i}U_i,\qquad\qquad \nabla_Z {V_i} =\dfrac{\lambda}{2\sigma_i}U_i.
\end{align*}
\end{theorem}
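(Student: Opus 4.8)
The plan is to compute all the Christoffel-type terms $\nabla_X Y$ for $X,Y$ ranging over the basis $\{U_1,V_1,\dots,U_n,V_n,Z\}$ directly from the Koszul formula stated in the Preliminaries, namely $2\langle\nabla_U V,W\rangle=\langle[U,V],W\rangle-\langle[V,W],U\rangle+\langle[W,U],V\rangle$. The only inputs are (a) the bracket relations $[U_i,V_i]=Z$ with all other brackets of basis elements vanishing, and (b) the inner product coming from Theorem \ref{diag}: with the normalization $\sigma_n=1$ one has $\langle U_i,U_i\rangle=\langle V_i,V_i\rangle=\sigma_i$ for $i=1,\dots,n$ (reading off $D_{n-1}(\sigma)$ together with the $S$-block, which contributes $\langle U_n,U_n\rangle=\langle V_n,V_n\rangle=1$), $\langle Z,Z\rangle=\lambda$, and all cross inner products between distinct basis vectors are zero.

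First I would record the nonzero brackets needed: for a fixed index $i$, $[U_i,V_i]=Z$, $[V_i,U_i]=-Z$, and $[U_i,Z]=[V_i,Z]=0$, and brackets across different indices vanish. Then, to find $\nabla_{U_i}V_i$, I would pair the Koszul formula against each basis vector $W$: against $W=Z$ it gives $2\langle\nabla_{U_i}V_i,Z\rangle=\langle[U_i,V_i],Z\rangle-\langle[V_i,Z],U_i\rangle+\langle[Z,U_i],V_i\rangle=\langle Z,Z\rangle=\lambda$, so the $Z$-component of $\nabla_{U_i}V_i$ is $\lambda/(2\langle Z,Z\rangle)=1/2$; against every other $W$ the right-hand side vanishes, giving $\nabla_{U_i}V_i=\tfrac12 Z$. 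By antisymmetry of the bracket the same computation yields $\nabla_{V_i}U_i=-\tfrac12 Z$. Similarly, to compute $\nabla_{U_i}Z$, pairing against $W=V_i$ gives $2\langle\nabla_{U_i}Z,V_i\rangle=\langle[U_i,Z],V_i\rangle-\langle[Z,V_i],U_i\rangle+\langle[V_i,U_i],Z\rangle=\langle -Z,Z\rangle=-\lambda$, so the $V_i$-component is $-\lambda/(2\langle V_i,V_i\rangle)=-\lambda/(2\sigma_i)$, and all other components vanish, giving $\nabla_{U_i}Z=-\tfrac{\lambda}{2\sigma_i}V_i$; the symmetric pairings give $\nabla_Z U_i$ the same value (consistent with $\nabla_{U_i}Z-\nabla_Z U_i=[U_i,Z]=0$). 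The analogous computation with $U_i$ and $V_i$ interchanged, tracking the sign change in the bracket, yields $\nabla_{V_i}Z=\nabla_Z V_i=\tfrac{\lambda}{2\sigma_i}U_i$.

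There is essentially no obstacle here beyond organized bookkeeping: the main point to be careful about is the normalization of the inner product — in particular that $\langle Z,Z\rangle=\lambda$ (not $1$), which is exactly what produces the $\lambda/(2\sigma_i)$ factors — and the fact that, because brackets across distinct indices vanish, the connection decouples index-by-index so only the six families listed can be nonzero. I would close by remarking that all connection components not listed in the statement vanish, which follows since for any triple of basis vectors not of the above form every term on the right-hand side of the Koszul formula is zero, and this completes the proof.
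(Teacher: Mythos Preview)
Your proposal is correct and follows essentially the same approach as the paper: both compute the connection directly from the Koszul formula by pairing against each basis vector, with the paper illustrating only the case $\nabla_{U_i}V_i$ explicitly while you also spell out $\nabla_{U_i}Z$ and the vanishing of the remaining components. Your additional remarks on the normalization $\langle Z,Z\rangle=\lambda$ and the index-by-index decoupling are accurate and make the bookkeeping more transparent than the paper's terse ``straightforward computations'' treatment.
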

\begin{proof}
Straightforward computations using Koszul's formula, show the above relations, for example we compute the first formula. We have,
\begin{align*}
&2\langle \nabla_{U_i}V_i ,U_j \rangle =\langle [U_i, V_i],   U_j \rangle -\langle [V_i,U_j],   U_i\rangle  +\langle [U_j ,U_i],  V_i \rangle =0,\\
&2\langle \nabla_{U_i}V_i , V_j\rangle =\langle [U_i, V_i],   V_j \rangle -\langle [V_i,V_j],   U_i\rangle  +\langle [V_j ,U_i],  V_i \rangle =0,\\
&2\langle \nabla_{U_i}V_i , Z\rangle =\langle [U_i, V_i],   Z \rangle -\langle [V_i, Z],   U_i\rangle  +\langle [Z, U_i ],  V_i \rangle =\langle Z,Z  \rangle =\lambda .
\end{align*}
Hence, $\nabla_{U_i} V_i =\dfrac{1}{2}Z$.
\end{proof}
\begin{theorem}
The Riemannian curvature tensor of $\nabla$, denoted by $R$, satisfies the following relations.
\begin{align*}
&R(U_i ,U_k)U_k =0, \qquad R(U_i ,V_k)V_k =-\dfrac{3\lambda}{4\sigma_k}\delta_{ik}U_k ,\qquad R(U_i ,Z)Z=\dfrac{\lambda^2}{4\sigma_i^2}U_i,\\
&R(V_i ,U_k)U_k =-\dfrac{3\lambda}{4\sigma_k}\delta_{ik}V_k ,\qquad  R(V_i ,V_k)V_k =0,\qquad R(V_i ,Z)Z=\dfrac{\lambda^2}{4\sigma_i^2}V_i,\\
&R(Z ,U_k)U_k =\dfrac{\lambda}{4\sigma_k}Z,\qquad R(Z ,V_k)V_k =\dfrac{\lambda}{4\sigma_k}Z,
\end{align*}
where, $\delta_{ij}$ is the Kronecker delta.
\end{theorem}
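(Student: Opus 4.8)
The plan is to compute the Riemannian curvature tensor $R(X,Y)Z = \nabla_X\nabla_Y Z - \nabla_Y\nabla_X Z - \nabla_{[X,Y]}Z$ directly on basis vectors, using the Levi-Civita connection formulas from Theorem \ref{levi} and the bracket relations $[U_i,V_i]=Z$ (all other brackets among basis elements vanishing). Since everything is left-invariant, the covariant derivatives are again left-invariant vector fields given explicitly by Theorem \ref{levi}, so each $R(X,Y)Z$ reduces to composing two of those linear maps and subtracting a third term $\nabla_{[X,Y]}Z$, which is nonzero only when the bracket $[X,Y]$ is a multiple of $Z$ (i.e. only for the pair $U_i,V_i$).

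I would organize the verification case by case, following the grouping in the statement. First, the ``geodesic-type'' terms $R(U_i,U_k)U_k$, $R(V_i,V_k)V_k$: here $[U_i,U_k]=0$, so $R(U_i,U_k)U_k = \nabla_{U_i}\nabla_{U_k}U_k - \nabla_{U_k}\nabla_{U_i}U_k$; but $\nabla_{U_k}U_k = 0$ and $\nabla_{U_i}U_k$ is either $0$ (if $i\ne k$) or $0$ (if $i=k$), giving $0$. Next, $R(U_i,V_k)V_k$: for $i\ne k$ all relevant connection terms and the bracket vanish, so the result is $0$; for $i=k$ one computes $\nabla_{U_i}\nabla_{V_i}V_i - \nabla_{V_i}\nabla_{U_i}V_i - \nabla_{[U_i,V_i]}V_i = \nabla_{U_i}(\tfrac12 Z) - \nabla_{V_i}(\tfrac12 Z) - \nabla_Z V_i = \tfrac12(-\tfrac{\lambda}{2\sigma_i}V_i) - \tfrac12(\tfrac{\lambda}{2\sigma_i}U_i) - \tfrac{\lambda}{2\sigma_i}U_i$; collecting and being careful with the bookkeeping this should yield $-\tfrac{3\lambda}{4\sigma_i}U_i$, consistent with the claimed coefficient. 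The terms involving $Z$, namely $R(U_i,Z)Z$, $R(V_i,Z)Z$, $R(Z,U_k)U_k$, $R(Z,V_k)V_k$, are handled the same way: $[U_i,Z]=[V_i,Z]=0$, so no $\nabla_{[\cdot,\cdot]}$ correction appears, and one just iterates the formulas $\nabla_{U_i}Z=-\tfrac{\lambda}{2\sigma_i}V_i$, $\nabla_Z U_i = -\tfrac{\lambda}{2\sigma_i}V_i$, etc., and simplifies.

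The main obstacle is purely bookkeeping rather than conceptual: one must keep the Kronecker-delta index conditions straight (a term like $\nabla_{U_i}V_k$ is $\tfrac12\delta_{ik}Z$, not $\tfrac12 Z$), and sign errors in the antisymmetric pairs $U_i\leftrightarrow V_i$ are easy to make, especially in the cross terms where $\nabla_{[U_i,V_i]}$ contributes. I would also double-check two consistency conditions as sanity checks: the first Bianchi identity $R(X,Y)Z+R(Y,Z)X+R(Z,X)Y=0$ on a few triples, and the symmetry $\langle R(X,Y)Y,X\rangle$ against the known sectional curvatures of Heisenberg-type metrics (planes containing $Z$ having positive curvature, the $u_i$--$v_i$ planes negative), which matches the signs $+\tfrac{\lambda^2}{4\sigma_i^2}$ and $-\tfrac{3\lambda}{4\sigma_k}$ appearing above. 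Once all the listed brackets are checked, the remaining curvature components either vanish or follow by the antisymmetry $R(X,Y)=-R(Y,X)$, so no separate argument is needed for them.
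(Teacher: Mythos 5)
Your approach is exactly the paper's: compute $R(X,Y)Y=\nabla_X\nabla_Y Y-\nabla_Y\nabla_X Y-\nabla_{[X,Y]}Y$ case by case from the connection formulas of Theorem \ref{levi} and the single nonzero bracket $[U_i,V_i]=Z$, and the case analysis you outline is complete. However, the one computation you actually write out contains a genuine error: you substitute $\nabla_{V_i}V_i=\tfrac12 Z$ into the first term, whereas $\nabla_{V_i}V_i=0$ (it is absent from Theorem \ref{levi}, and the Koszul formula confirms it vanishes), so $\nabla_{U_i}\nabla_{V_i}V_i=0$ and the spurious contribution $-\tfrac{\lambda}{4\sigma_i}V_i$ in your expression should not be there. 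As written, your sum evaluates to $-\tfrac{\lambda}{4\sigma_i}V_i-\tfrac{3\lambda}{4\sigma_i}U_i$, not the claimed $-\tfrac{3\lambda}{4\sigma_i}U_i$; the correct computation, which is the one the paper gives, is $R(U_i,V_i)V_i=0-\tfrac12\nabla_{V_i}Z-\nabla_Z V_i=-\tfrac{\lambda}{4\sigma_i}U_i-\tfrac{\lambda}{2\sigma_i}U_i=-\tfrac{3\lambda}{4\sigma_i}U_i$. With that single term corrected, the rest of your plan (including the $Z$-direction cases and the Bianchi/sign sanity checks) goes through and reproduces the stated table.
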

\begin{proof}
Routine computations show these results. For example we compute the second formula. One can write,
\begin{align*}
R(U_i ,V_k)V_k &=\nabla_{U_i}\nabla_{V_k} V_k -\nabla_{V_k}\nabla_{U_i} V_k -\nabla_{[U_i,V_k]} V_k\\
&=-\delta_{ik}(\dfrac{1}{2}\nabla_{V_k} Z +\nabla_Z V_k)=-\dfrac{3\lambda}{4\sigma_k}\delta_{ik}U_k.
\end{align*}
\end{proof}
\begin{theorem}
The Ricci curvature tensor of $\nabla$, denoted by $\mathrm{Ric}$, satisfies the following relations.
\begin{align*}
& \mathrm{Ric} (U_i ,U_j)=-\dfrac{\lambda \delta_{ij}}{2\sigma_i},\\
& \mathrm{Ric} (V_i ,V_j)=-\dfrac{\lambda \delta_{ij}}{2\sigma_i},\\
& \mathrm{Ric} (Z,Z)=\dfrac{\lambda^2}{2}\sum_{k=1}^n \dfrac{1}{\sigma_k^2}.
\end{align*}
\end{theorem}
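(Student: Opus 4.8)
\noindent
The plan is to obtain $\mathrm{Ric}$ as the trace of the curvature operator, using the values of $R$ already tabulated in the previous theorem. Since $\{U_1,V_1,\dots,U_n,V_n,Z\}$ is $g$-orthogonal with $\langle U_i,U_i\rangle=\langle V_i,V_i\rangle=\sigma_i$ and $\langle Z,Z\rangle=\lambda$, it is convenient to pass to the orthonormal basis $\{\sigma_i^{-1/2}U_i,\ \sigma_i^{-1/2}V_i,\ \lambda^{-1/2}Z\}$, for which
\[
\mathrm{Ric}(X,Y)=\sum_{k=1}^n\frac{\langle R(U_k,X)Y,U_k\rangle+\langle R(V_k,X)Y,V_k\rangle}{\sigma_k}+\frac{\langle R(Z,X)Y,Z\rangle}{\lambda}.
\]
From the curvature theorem, together with the antisymmetry $R(A,B)=-R(B,A)$ and a relabeling of summation indices, one reads off exactly the data needed for the diagonal entries: $R(U_k,U_i)U_i=0$, $R(V_k,U_i)U_i=-\tfrac{3\lambda}{4\sigma_i}\delta_{ki}V_k$, $R(Z,U_i)U_i=\tfrac{\lambda}{4\sigma_i}Z$, the mirror identities under $U\leftrightarrow V$, and $R(U_k,Z)Z=\tfrac{\lambda^2}{4\sigma_k^2}U_k$, $R(V_k,Z)Z=\tfrac{\lambda^2}{4\sigma_k^2}V_k$.

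\noindent
Substituting these, the Kronecker deltas leave only the $k=i$ summand in $\mathrm{Ric}(U_i,U_i)$, which equals $\tfrac1{\sigma_i}\langle R(V_i,U_i)U_i,V_i\rangle+\tfrac1\lambda\langle R(Z,U_i)U_i,Z\rangle=-\tfrac{3\lambda}{4\sigma_i}+\tfrac{\lambda}{4\sigma_i}=-\tfrac{\lambda}{2\sigma_i}$; the case $\mathrm{Ric}(V_i,V_i)$ is identical by the $U\leftrightarrow V$ symmetry of $\nabla$; and in $\mathrm{Ric}(Z,Z)$ each of the two sums over $k$ contributes $\sum_k\tfrac{\lambda^2}{4\sigma_k^2}$, giving $\mathrm{Ric}(Z,Z)=\tfrac{\lambda^2}{2}\sum_k\tfrac1{\sigma_k^2}$. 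The vanishing of the remaining components — $\mathrm{Ric}(U_i,U_j)$ and $\mathrm{Ric}(V_i,V_j)$ for $i\ne j$, as well as the mixed terms $\mathrm{Ric}(U_i,V_j)$, $\mathrm{Ric}(U_i,Z)$, $\mathrm{Ric}(V_i,Z)$ — follows either by feeding the corresponding curvature components into the same trace formula, or more cheaply by observing that the sign changes $U_i\mapsto -U_i,\ V_i\mapsto -V_i$ and the rotations $U_i\mapsto V_i,\ V_i\mapsto -U_i$ preserve both the bracket relations and the inner product, hence are isometric automorphisms under which $\mathrm{Ric}$ is invariant, forcing those entries to equal their own negatives.

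\noindent
There is no genuine obstacle here: the argument is a bookkeeping exercise in tracing a curvature tensor that is already in hand. The only points requiring care are the normalization factors $\sigma_k$ and $\lambda$ coming from the non-orthonormal basis (equivalently, the $\sqrt{\sigma_i}$ rescalings if one orthonormalizes first), and the correct use of the symmetries of $R$ to extract from the tabulated curvatures $R(\cdot,\cdot)\cdot$ precisely the components the trace formula calls for.
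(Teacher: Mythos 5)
Your proposal is correct and follows essentially the same route as the paper: both compute $\mathrm{Ric}$ as the trace of the curvature operator over the orthonormalized basis $\{\sigma_k^{-1/2}U_k,\sigma_k^{-1/2}V_k,\lambda^{-1/2}Z\}$, substituting the tabulated values of $R$ and using its symmetries, and the arithmetic checks out. The only difference is cosmetic: you additionally verify the vanishing of the mixed components (via an isometric-automorphism argument), whereas the paper leaves these implicit in the $\delta_{ij}$ of a single representative computation.
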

\begin{proof}
It is straightforward to check these results. For instance, we can write:
\begin{align*}
\mathrm{Ric} (V_i ,V_j)&=\dfrac{1}{\lambda}\langle R(V_i ,Z)Z,V_j \rangle +\sum_{k=1}^n \dfrac{1}{\sigma_k} \langle R(V_i ,U_k)U_k , V_j \rangle \\
&= \dfrac{1}{\lambda}\langle \dfrac{\lambda^2}{4\sigma_i^2}V_i,V_j \rangle +\sum_{k=1}^n \dfrac{1}{\sigma_k} \langle -\dfrac{3\lambda}{4\sigma_k}\delta_{ik}V_k , V_j \rangle \\
&=  \dfrac{\lambda}{4\sigma_i^2}\langle V_i,V_j \rangle  -\dfrac{3\lambda}{4\sigma_i^2}\langle V_i , V_j \rangle \\
&=-\dfrac{\lambda}{2\sigma_i^2}\langle V_i , V_j \rangle =-\dfrac{\lambda}{2\sigma_i^2}\delta_{ij}\sigma_i =-\dfrac{\lambda\delta_{ij}}{2\sigma_i}.
\end{align*}
\\
{\bf Remark:} The above computations indicate that $H_{2n+1}$ neither cannot be Einstein nor the metric is conformally flat. The next theorem shows that the Heisenberg groups $H_{2n+1}$ have constant negative scalar curvature.
\end{proof}
\begin{theorem}
The scalar curvature of $\nabla$, denoted by $\mathrm{R}$, is given by the following formula.
\[\mathrm{R}=-\dfrac{\lambda}{2}\sum_{i=1}^n \dfrac{1}{\sigma^2}.
\]
\end{theorem}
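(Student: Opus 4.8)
The plan is to compute $\mathrm{R}$ as the metric trace of the Ricci tensor obtained in the previous theorem. Because the basis $\beta=\{U_1,V_1,\dots,U_n,V_n,Z\}$ is orthogonal for $g$ with $\langle U_i,U_i\rangle=\langle V_i,V_i\rangle=\sigma_i$ and $\langle Z,Z\rangle=\lambda$, the inverse metric is diagonal in $\beta$ with entries $\sigma_i^{-1}$ in the $U_i$ and $V_i$ slots and $\lambda^{-1}$ in the $Z$ slot. Hence
\[
\mathrm{R}=\sum_{i=1}^n \frac{1}{\sigma_i}\,\mathrm{Ric}(U_i,U_i)+\sum_{i=1}^n \frac{1}{\sigma_i}\,\mathrm{Ric}(V_i,V_i)+\frac{1}{\lambda}\,\mathrm{Ric}(Z,Z).
\]
This is precisely the same weighted-trace convention already used in the proof of the Ricci theorem (where $\mathrm{Ric}(V_i,V_j)$ was expanded with the coefficients $\lambda^{-1}$ and $\sigma_k^{-1}$), so no new normalization issue arises.

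First I would insert the values $\mathrm{Ric}(U_i,U_i)=\mathrm{Ric}(V_i,V_i)=-\dfrac{\lambda}{2\sigma_i}$ and $\mathrm{Ric}(Z,Z)=\dfrac{\lambda^2}{2}\sum_{k=1}^n \dfrac{1}{\sigma_k^2}$ from the Ricci theorem. The $U$-sum and the $V$-sum then each contribute $-\dfrac{\lambda}{2}\sum_{i=1}^n\dfrac{1}{\sigma_i^2}$, whereas the $Z$-term contributes $+\dfrac{\lambda}{2}\sum_{k=1}^n\dfrac{1}{\sigma_k^2}$; adding the three and collecting gives
\[
\mathrm{R}=-\frac{\lambda}{2}\sum_{i=1}^n \frac{1}{\sigma_i^2},
\]
which is the asserted formula. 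Equivalently, one may pass first to the orthonormal frame $\{\,U_i/\sqrt{\sigma_i},\,V_i/\sqrt{\sigma_i},\,Z/\sqrt{\lambda}\,\}$ and then take the ordinary (unweighted) trace of $\mathrm{Ric}$; the arithmetic is identical.

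There is no genuine obstacle in this argument: the only points needing care are the inverse-metric weights $\sigma_i^{-1}$ and $\lambda^{-1}$ — the basis $\beta$ is orthogonal but not orthonormal — and the sign bookkeeping, whereby the single positive term $\lambda^{-1}\mathrm{Ric}(Z,Z)$ only halves the two negative $U,V$ contributions rather than cancelling them. Finally, since $g$ is left-invariant the function $\mathrm{R}$ is constant on $H_{2n+1}$, and since $\sigma_i\ge 1$ and $\lambda>0$ the right-hand side is strictly negative; combined with Theorem \ref{diag}, which represents every positive definite inner product on $\mathbb{H}_{2n+1}$ in this diagonal form, it follows that every left-invariant Riemannian metric on $H_{2n+1}$ has constant negative scalar curvature.
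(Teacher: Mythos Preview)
Your proof is correct and follows exactly the paper's own approach: write the scalar curvature as the inverse-metric weighted trace $\lambda^{-1}\mathrm{Ric}(Z,Z)+\sum_i\sigma_i^{-1}\mathrm{Ric}(U_i,U_i)+\sum_i\sigma_i^{-1}\mathrm{Ric}(V_i,V_i)$, substitute the Ricci values, and observe that the $Z$-term cancels one of the two negative sums. Your additional remarks (orthonormal-frame reformulation, constancy and strict negativity) are correct but go slightly beyond what the paper records in the proof itself.
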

\begin{proof} We have,
\begin{align*}
\mathrm{R}&=  \dfrac{1}{\lambda}Ric  (Z,Z) +\sum_{i=1}^n \dfrac{1}{\sigma_i} Ric(U_i ,U_i) +\sum_{i=1}^n \dfrac{1}{\sigma_i} Ric(V_i ,V_i)\\
&=\sum_{i=1}^n \dfrac{\lambda}{2} \dfrac{1}{\sigma_i^2}-\dfrac{\lambda}{2} \dfrac{1}{\sigma_i^2}-\dfrac{\lambda}{2} \dfrac{1}{\sigma_i^2}\\
&=-\dfrac{\lambda}{2}\sum_{i=1}^n \dfrac{1}{\sigma^2}.
\end{align*}
\end{proof}
\section{Main Results}
In this section, our main results will be stated.
\begin{theorem}\label{berwa}
There is not any left-invariant Randers metric of Berwald type on the Heisenberg group $\mathbb{H}_{2n+1}$. 
\end{theorem}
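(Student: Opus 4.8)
The plan is to reduce the statement to a short linear-algebra computation on the Lie algebra, using the Berwald criterion recalled in Section~2: a left-invariant Randers metric $F(x,y)=\sqrt{g(y,y)}+g(U,y)$ is of Berwald type if and only if its underlying left-invariant vector field $U\in\mathbb{H}_{2n+1}$ is parallel with respect to the Levi-Civita connection $\nabla$ of $g$, i.e. $\nabla_W U=0$ for all $W\in\mathbb{H}_{2n+1}$. Since a genuine Randers metric requires $U\neq 0$ (otherwise $F$ is merely Riemannian), it suffices to prove that $U=0$ is the only parallel left-invariant vector field.

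First I would write a general element in the basis $\beta$ as $U=\sum_{i=1}^{n}(a_iU_i+b_iV_i)+cZ$ and assemble the full connection table. Theorem~\ref{levi} supplies the nonzero products $\nabla_{U_i}V_i$, $\nabla_{V_i}U_i$, $\nabla_{U_i}Z=\nabla_ZU_i$, $\nabla_{V_i}Z=\nabla_ZV_i$, and one checks directly from Koszul's formula that every remaining product vanishes --- in particular $\nabla_{U_i}U_j=\nabla_{V_i}V_j=0$ and $\nabla_{U_i}V_j=\nabla_{V_i}U_j=0$ whenever $i\neq j$ --- because the only nonvanishing brackets are $[U_i,V_i]=z$. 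Then I would impose the parallel condition componentwise. Evaluating $\nabla_{U_j}U$ gives $\tfrac{b_j}{2}Z-\tfrac{c\lambda}{2\sigma_j}V_j$, so $b_j=0$ for every $j$ and, since $\lambda>0$, also $c=0$; evaluating $\nabla_{V_j}U$ then gives $-\tfrac{a_j}{2}Z$, forcing $a_j=0$ for every $j$. Hence $U=0$, contradicting $U\neq 0$, so no Berwald-type left-invariant Randers metric exists on $H_{2n+1}$.

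The argument involves no serious obstacle; the only point needing care is that Theorem~\ref{levi} records merely the ``diagonal'' connection coefficients, so one must explicitly note the vanishing of all cross terms $\nabla_{U_i}U_j$, $\nabla_{U_i}V_j$ with $i\neq j$, and so on, before drawing the conclusion --- an omission there is essentially the only way the proof could go wrong. Equivalently, and perhaps more transparently, one can argue via inner products: $\langle\nabla_{U_j}U,Z\rangle=0$ and $\langle\nabla_{V_j}U,Z\rangle=0$ already annihilate all the $U_i$- and $V_i$-components of $U$, and then $\langle\nabla_{U_j}U,V_j\rangle=0$ annihilates the $Z$-component, again yielding $U=0$.
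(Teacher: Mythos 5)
Your proposal is correct and follows essentially the same route as the paper's proof: reduce to the parallelism criterion for Berwald type, expand $U$ in the basis $\beta$, and use the connection table of Theorem~\ref{levi} to force all coefficients to vanish. You simply make explicit the componentwise computation (and the vanishing of the off-diagonal connection coefficients) that the paper leaves to the reader.
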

\begin{proof}
As mentioned before, a left invariant Randers metric on Lie group $G$ is constructed by a left-invariant vector field with length $<1$. In the other hand, by definition a Randers metric is Berwald type if and only if the vector field be parallel with respect to the Levi-Civita connection. Suppose that $Q$ is a left-invariant vector field on $\mathbb{H}_{2n+1}$ which is parallel with respect to the Levi-Civita connection. This implies that for all $U\in \mathbb{H}_{2n+1}$ we have $\nabla_U Q=0$. Let 
\[Q=aZ+\sum_{i=1}^n a_i U_i+ b_i V_i,
\]
then the relations in theorem \ref{levi} show that $a=a_i =b_i =0$ for $1\leq i\leq n$, which is a contradiction.
\end{proof}
Note that a Finsler metric is said to be Ricci-quadratic if its Ricci curvature $\mathrm{Ric}(x,y)$ is quadratic with respect to $y$.
\begin{theorem}
There is not any left-invariant Randers metric of Douglas type on the Heisenberg group $H_{2n+1}$ which is Ricci-quadratic. 
\end{theorem}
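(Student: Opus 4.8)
The plan is to turn the Douglas hypothesis into a restriction on the defining vector field, record the connection data it produces, and then show that Ricci-quadraticity forces this vector field to vanish. By the criterion recalled in Section~2, the left-invariant Randers metric $F=\sqrt{g(y,y)}+g(Q,y)$ attached to $Q\in\mathbb{H}_{2n+1}$ is of Douglas type precisely when $\langle[U,V],Q\rangle=0$ for all $U,V\in\mathbb{H}_{2n+1}$; since the only nonzero brackets are $[U_i,V_i]=Z$, this says $\langle Z,Q\rangle=0$, i.e.\ the central component of $Q$ vanishes. So we may write $Q=\sum_{i=1}^{n}(a_iU_i+b_iV_i)$, and $\beta=g(Q,\cdot)$ is a closed left-invariant $1$-form.

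Next I would invoke the standard description of the geodesic spray of a Douglas-type Randers metric $F=\alpha+\beta$ with $\beta$ closed, namely $G^i=G^i_\alpha+\dfrac{r_{00}}{2F}\,y^i$, where $r_{ij}=\tfrac12(b_{i;j}+b_{j;i})$ is the symmetrization of the covariant derivative of $\beta$ with respect to the Levi--Civita connection of $g$ and $r_{00}=r_{ij}y^iy^j$. Metricity of that connection gives $r(X,Y)=\tfrac12\big(\langle\nabla_X Q,Y\rangle+\langle\nabla_Y Q,X\rangle\big)$, and a short computation from Theorem~\ref{levi} shows that its only nonvanishing components are $r(U_j,Z)=\dfrac{\lambda b_j}{2}$ and $r(V_j,Z)=-\dfrac{\lambda a_j}{2}$; equivalently, writing $y=\sum_j(p_jU_j+q_jV_j)+tZ$ one gets $r_{00}=\lambda\,t\sum_{j=1}^{n}(b_jp_j-a_jq_j)$. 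In particular $r_{ij}\equiv0$ iff $Q=0$.

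It then remains to compute $\mathrm{Ric}_F$ and impose that it be quadratic in $y$. Since $G^i$ differs from $G^i_\alpha$ only by the projective term $Py^i$ with $P=\dfrac{r_{00}}{2F}$, the standard formula for the projective change of the Ricci curvature presents $\mathrm{Ric}_F$ as $\mathrm{Ric}_\alpha$ plus a universal expression built from $P$, its vertical derivatives, and its $\alpha$-horizontal covariant derivatives; each summand is rational in $y$ with a power of $F$ in the denominator. Rationalizing $\tfrac1F=\dfrac{\alpha-\beta}{\alpha^2-\beta^2}$ — and using that $\alpha^2-\beta^2$ is a positive-definite quadratic form (here $g(Q,Q)<1$ enters), hence not a perfect square in $\mathbb{R}[y]$ — brings $\mathrm{Ric}_F-\mathrm{Ric}_\alpha$ to the form $\dfrac{A(y)+\alpha\,B(y)}{(\alpha^2-\beta^2)^{2}}$ with $A,B$ polynomial in $y$. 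Requiring $\mathrm{Ric}_F$ to be quadratic then forces $B\equiv0$ and forces $A$ to equal $(\alpha^2-\beta^2)^2$ times a quadratic form; unravelling these polynomial identities should leave only the solution $r_{ij}\equiv0$, whence $Q=0$, contradicting that $F$ is a Randers metric. This last step — pushing the projective Ricci formula through, separating its $\alpha$-irrational part from the rational part, and squeezing $r_{ij}\equiv0$ out of $B\equiv0$ and the divisibility of $A$ — is the one I expect to be the genuine obstacle. As a cross-check, once $r_{ij}\equiv0$ it combines with the Douglas condition (the skew part of $\nabla\beta$ vanishes) to give $\nabla Q=0$, so $F$ would be of Berwald type, which is ruled out by Theorem~\ref{berwa}.
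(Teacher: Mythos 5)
Your reduction of the Douglas condition to $\langle Q,Z\rangle=0$, your computation of the symmetrized covariant derivative ($r(U_j,Z)=\tfrac{\lambda b_j}{2}$, $r(V_j,Z)=-\tfrac{\lambda a_j}{2}$, all other components zero), and your closing observation that $r\equiv 0$ together with closedness of $\beta$ gives $\nabla Q=0$ and hence a Berwald metric (excluded by Theorem~\ref{berwa}) are all correct and consistent with what the paper does. The problem is the middle of your argument: the implication ``$F$ Ricci-quadratic $\Rightarrow r_{ij}\equiv 0$'' is exactly the mathematical content of the theorem, and you do not prove it --- you describe a strategy (projective change of the Ricci curvature, rationalizing $1/F$, separating the $\alpha$-irrational part, a divisibility argument on $A$ and $B$) and then explicitly flag this step as ``the genuine obstacle.'' As written, the proposal is a plan with a hole located precisely where the work is. Nothing in what you wrote rules out, for instance, that the rational terms contributed by $P=r_{00}/(2F)$ and its derivatives conspire to cancel or to sum to a quadratic form for some nonzero $Q$; you would have to actually expand $\mathrm{Ric}_F-\mathrm{Ric}_\alpha$ and run the polynomial-identity analysis to exclude this.

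The paper avoids this computation entirely by citing a known classification result (Deng, Theorem~7.9): a homogeneous Randers metric of Douglas type is Ricci-quadratic if and only if it is of Berwald type. With that theorem in hand the proof is two lines --- Ricci-quadratic plus Douglas forces Berwald, and Theorem~\ref{levi} shows the only parallel left-invariant vector field is $0$, contradicting the Randers hypothesis. Your route is essentially an attempt to reprove Deng's theorem by hand in this special case; that is a legitimate and potentially more self-contained alternative, but to make it a proof you must either carry out the spray/Ricci computation to its end or simply invoke the cited general theorem as the paper does.
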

\begin{proof}
It is worth mentioning that there is a bijective correspondence between left-invariant Randers metric of Douglas type on $H_{2n+1}$ and the set $\{U\in \mathbb{H}_{2n+1}| <U,U><1 ,\quad <U,Z>=0 \}$. Suppose that $Q=\sum_{i=1}^n a_i U_i+ b_i V_i$ is a left-invariant vector field such that
\[F(a,v)=\sqrt{g(v,v)}+g(Q(a),v),\]
is a left-invariant Randers-Douglas metric on $H_{2n+1}$, and $F$ is Ricci-quadratic. Then by (\cite{Deng}, Theorem 7.9) $F$ is Berwald type. But the relations in theorem \ref{levi} show that $Q=0$, which is a contradiction.
\end{proof}
We recall that a naturally reductive homogeneous space, is a reductive homogeneous Riemannian manifold $M=\dfrac{G}{H}$ with a decomposition $\mathfrak{g}=\mathfrak{h}+\mathfrak{m}$, that satisfies
\[ \langle [X,Y]_\mathfrak{m} ,Z\rangle =-\langle Y, [X,Z]_\mathfrak{m} \rangle, \qquad X,Y,Z\in \mathfrak{m}.
\]
Moreover, when $H=\{e\}$, then $\mathfrak{m}=\mathfrak{g}$ and the above condition can be rewrite as follows
\[ \langle [X,Y] ,Z\rangle +\langle Y, [X,Z] \rangle =0, \qquad X,Y,Z\in \mathfrak{g}.\]
Now we are able to prove the following result.
\begin{theorem}
Randers Heisenberg group $H_{2n+1}$ of dimension $2n+1$ is never naturally reductive.
\end{theorem}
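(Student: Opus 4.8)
The plan is to reduce the assertion to a single fact about the underlying left-invariant Riemannian metric — namely, that it is \emph{not} naturally reductive — and then to settle that by one bracket computation. The lever is that, whatever the precise form of the characterization of naturally reductive homogeneous Randers spaces one takes from \cite{Deng}, it always \emph{requires} the underlying Riemannian homogeneous space $(G/H,\alpha)$ to be naturally reductive with respect to the same reductive decomposition; the Randers datum $Q$ can only tighten the constraints, never loosen the Riemannian one. In the left-invariant situation $H=\{e\}$, $\mathfrak{m}=\mathfrak{g}$, that Riemannian condition is precisely the identity $\langle[X,Y],Z\rangle+\langle Y,[X,Z]\rangle=0$ for all $X,Y,Z\in\mathfrak{g}$ recalled just above. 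So it suffices to exhibit one triple in $\mathbb{H}_{2n+1}$ on which this identity fails; since this is a defect of $g$ alone, it persists for every admissible $Q$, which is exactly what the word ``never'' records.

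Concretely I would test the identity on $X=u_i$, $Y=v_i$, $Z=z$. Using $[u_i,v_i]=z$ and $[u_i,z]=0$ from (\ref{bracket}), together with the diagonal form of $g$ from Theorem~\ref{diag} (so that $\langle z,z\rangle=\lambda>0$) and hence $\langle v_i,[u_i,z]\rangle=0$,
\[
\langle[u_i,v_i],z\rangle+\langle v_i,[u_i,z]\rangle=\langle z,z\rangle=\lambda\neq0.
\]
Thus $(\mathbb{H}_{2n+1},g)$, and therefore $(\mathbb{H}_{2n+1},F)$ for any left-invariant Randers metric (\ref{randers}) attached to a $Q\in\{U\in\mathfrak{g}\mid\langle U,U\rangle<1\}$, fails to be naturally reductive for the tautological homogeneous structure coming from the Lie group $H_{2n+1}$. (One might hope to shortcut this via Theorem~\ref{berwa}, but I would not: a naturally reductive Randers metric need not be of Berwald type, so such an argument needs extra input, whereas the Riemannian obstruction above is immediate.)

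The step I expect to require the most care is the force of the word ``never'': strictly, one should exclude \emph{every} presentation of $(H_{2n+1},F)$ as a naturally reductive space $G'/H'$, not merely the one by left translations. Here the mechanism is that $\mathrm{Isom}(H_{2n+1},F)$ sits inside the subgroup of $\mathrm{Isom}(H_{2n+1},g)$ that fixes $Q$, so any transitive group $G'$ of Randers isometries is a transitive group of $g$-isometries preserving $Q$; combining the $G'$-invariance of $Q$ with the rigidity of naturally reductive Riemannian nilmanifolds (at most two-step, with a strongly constrained transitive group) should let one transport the reductive complement back inside $\mathbb{H}_{2n+1}$ and then apply the displayed computation verbatim. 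Everything outside this last reconciliation is the routine Koszul- and bracket-bookkeeping already developed in Sections 3 and 4.
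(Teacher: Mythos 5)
Your proposal is correct and follows essentially the same route as the paper: reduce to the fact that a naturally reductive invariant Randers metric forces the underlying left-invariant Riemannian metric to be naturally reductive (the paper cites Theorem 4.1 of \cite{latifi} for this), and then contradict that with the triple $(U_1,V_1,Z)$, which yields $\langle[U_1,V_1],Z\rangle+\langle V_1,[U_1,Z]\rangle=\lambda\neq 0$ — exactly your computation. Your additional scruple about excluding every transitive presentation goes beyond what the paper addresses, but the core argument coincides.
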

\begin{proof}
Suppose, conversely, that a Randers metric $F$ on $H_{2n+1}$ is naturally reductive. Then by theorem 4.1 in \cite{latifi} the underling Riemannain metric is naturally reductive and we have
\[\langle [U,V] ,W\rangle +\langle V, [U,W] \rangle =0, \qquad U,V,W\in \mathbb{H}_{2n+1}.\]
If we replace $(U,V,W)$ by $(U_1, V_1 ,Z)$, then we have the contradiction $\lambda =0$.
\end{proof}
As we observed in theorem \ref{berwa}, the Heisenberg group $H_{2n+1}$ can not admit left-invariant Randers metric of Berwald type. A special family of non Berwald left-invariant Randers metrics which give us a geometric relationship between the Lie algebra $\mathbb{H}_{2n+1}$ and the Randers metrics are $Z$-Randers metrics. We say a left-invariant Randers metric 
\[F(a,v)=\sqrt{g(v,v)}+g(Q(a),v)\] 
on $H_{2n+1}$ is $Z$-Randers metric if and only if $Q\in \mathrm{span}<Z>$. In fact, the condition $Q\in \mathrm{span}<Z>$ guarantees that the Randers metric is not Berwald.\\

A homogeneous Finsler space $(M,F)$ is said to be a geodesic orbit space if every geodesic in $M$ is an orbit of 1-parameter group of isometries. More details on such spaces can be found in \cite{latif}. 
\begin{theorem}
All Douglas Hiesenberg groups $H_{2n+1}$ are not geodesic orbit spaces. The only Randers metrics on Hiesenberg group $H_{2n+1}$ which could be geodesic orbit are $Z$-Randers metric.
\end{theorem}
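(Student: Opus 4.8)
The plan is to derive both statements from the vanishing of the $S$-curvature on a geodesic orbit space. I would first recall the standard fact that a homogeneous geodesic orbit Finsler space has $S$-curvature identically zero: along any geodesic $\gamma(t)$, which by hypothesis is the orbit of a one-parameter group of isometries of $F$, the distortion $\tau(\gamma(t),\dot\gamma(t))$ is constant, because isometries of $F$ preserve the Busemann--Hausdorff volume form and the fundamental tensor and hence $\tau$; differentiating at $t=0$ gives $S\equiv 0$ (see, e.g., \cite{latif}). Applying this to a left-invariant Randers metric $F=\sqrt{g(\cdot,\cdot)}+g(Q,\cdot)$ on $H_{2n+1}$: if $F$ is geodesic orbit then $S\equiv 0$, which by the criterion recalled in Section~2 (from \cite{Deng}, Proposition~7.5) is equivalent to $\mathrm{ad}(Q)$ being skew-symmetric with respect to $\langle\cdot,\cdot\rangle$.

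The next step is to unravel this skew-symmetry. Writing $Q=aZ+\sum_{i=1}^{n}(a_iU_i+b_iV_i)$ and using the brackets $[U_i,V_i]=Z$ (all other brackets of basis elements vanishing), I get $\mathrm{ad}(Q)U_j=-b_jZ$, $\mathrm{ad}(Q)V_j=a_jZ$ and $\mathrm{ad}(Q)Z=0$. Since $\mathrm{ad}(Q)$ has image in $\mathrm{span}\langle Z\rangle$ and $Z\perp U_j,V_j$, the relation $\langle\mathrm{ad}(Q)X,Y\rangle+\langle X,\mathrm{ad}(Q)Y\rangle=0$ is automatic unless $Y=Z$, in which case $X=U_j$ gives $-b_j\langle Z,Z\rangle=0$ and $X=V_j$ gives $a_j\langle Z,Z\rangle=0$. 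As $\langle Z,Z\rangle=\lambda>0$, this forces $a_j=b_j=0$ for every $j$, i.e. $Q\in\mathrm{span}\langle Z\rangle$. Hence every geodesic orbit Randers metric on $H_{2n+1}$ is a $Z$-Randers metric, which is the second assertion.

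For the first assertion I would use that a left-invariant Randers metric on $H_{2n+1}$ is of Douglas type iff $\langle[U,V],Q\rangle=0$ for all $U,V\in\mathbb{H}_{2n+1}$; taking $U=U_1$, $V=V_1$ yields $\langle Z,Q\rangle=0$, so a Douglas vector field satisfies $Q\perp Z$. If such a metric were also geodesic orbit, the second step would give $Q\in\mathrm{span}\langle Z\rangle$, and combined with $\langle Q,Z\rangle=0$ and $\lambda>0$ this forces $Q=0$, contradicting that $F$ is a genuine (non-Riemannian) Randers metric. Therefore no Douglas-type Randers metric on $H_{2n+1}$ is geodesic orbit. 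The only non-elementary ingredient is the implication ``geodesic orbit $\Rightarrow S\equiv 0$'' paired with the $S$-curvature criterion of Section~2; once these are granted the argument is just the short bracket computation above, so I expect that citation to be the main point needing care. A heavier, self-contained alternative would be to write out the homogeneous geodesic equation for a Randers metric and directly classify its geodesic vectors.
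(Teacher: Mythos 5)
Your proposal is correct and follows essentially the same route as the paper: both reduce ``geodesic orbit'' to vanishing $S$-curvature (via the result cited from \cite{latif}), translate that into skew-symmetry of $\mathrm{ad}(Q)$ using the criterion from \cite{Deng} recalled in Section~2, and then observe that the brackets $[Q,U_j]=-b_jZ$, $[Q,V_j]=a_jZ$, $[Q,Z]=0$ together with $\langle Z,Z\rangle=\lambda>0$ force $a_j=b_j=0$, so $Q\in\mathrm{span}\langle Z\rangle$ and a Douglas-type (hence $Q\perp Z$) metric would require $Q=0$. Your write-up is in fact slightly more complete than the paper's, since you check all cases of the skew-symmetry relation and sketch why geodesic orbit implies $S\equiv 0$, but the underlying argument is identical.
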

\begin{proof}
Suppose that $Q=\sum_{i=1}^n a_i U_i+ b_i V_i$ is a left-invariant vector field such that
\[F(a,v)=\sqrt{g(v,v)}+g(Q(a),v),\]
is a left-invariant Randers-Douglas metric on $H_{2n+1}$. If $(H_{2n+1},F)$ be a geodesic orbit Finsler space, then by Corollary 5.3 of \cite{latif}, the $S$-curvature vanishes which means $ad(Q)$ is a skew symmetric on the $\mathbb{H}_{2n+1}$. But we have
\[ [Q,U_i]=-b_i Z,\qquad [Q,V_i]=a_i Z,\qquad [Q,Z]=0.
\]
So, $ad(Q)$ is skew symmetric if and only if $a_i=b_i=0$ for $1\leq i\leq n$, which is a contradiction.\\
Now, let $F(a,v)=\sqrt{g(v,v)}+g(Q(a),v)$ is a left-invariant Randers metric on $H_{2n+1}$ with $Q=aZ+\sum_{i=1}^n a_i U_i+ b_i V_i$. If $(H_{2n+1},F)$ be a geodesic orbit Finsler space, then a similar argument applies to $ad(Q)$ derives the assertion. 
\end{proof}
Recall that a connected Finsler space $(M,F)$ is said to be a weakly symmetric space if for every two points $p$ and $q$ in $M$ there exists an isometry $\phi$ in the complete group of isometries $I(M,F)$ such that $\phi(p)=q$. A weakly symmetric Finsler space must be a geodesic orbit Finsler space by Theorem 6.3 in \cite{Deng}.
\begin{corollary}
All Douglas Hiesenberg groups $H_{2n+1}$ are never weakly symmetric spaces.
\end{corollary}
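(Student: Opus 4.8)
The plan is to obtain the corollary immediately from the chain of implications already assembled in this section, with essentially no new computation. First I would recall the general fact, Theorem 6.3 in \cite{Deng}, that every weakly symmetric Finsler space is a geodesic orbit Finsler space. Consequently, if some Douglas-type left-invariant Randers metric $F$ on $H_{2n+1}$ made $(H_{2n+1},F)$ weakly symmetric, then $(H_{2n+1},F)$ would a fortiori be a geodesic orbit space.

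Next I would invoke the first assertion of the preceding theorem, which states precisely that no Douglas Heisenberg group $H_{2n+1}$ is a geodesic orbit space. Recall the mechanism there: for a Douglas-type metric the underlying left-invariant vector field has the form $Q=\sum_{i=1}^n a_i U_i + b_i V_i$, being a geodesic orbit space forces the $S$-curvature to vanish (Corollary 5.3 of \cite{latif}), hence $ad(Q)$ must be skew-symmetric on $\mathbb{H}_{2n+1}$; but the brackets $[Q,U_i]=-b_i Z$, $[Q,V_i]=a_i Z$, $[Q,Z]=0$ show that $ad(Q)$ is skew-symmetric only when all $a_i=b_i=0$, i.e. $Q=0$, which is excluded. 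Combining the two facts, a weakly symmetric Douglas Heisenberg group would be geodesic orbit, contradicting this, so no such space exists.

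Since the argument is merely a concatenation of two results proved (or cited) above, there is no genuine obstacle. The only point deserving a sentence of care is that the Douglas-type hypothesis is used essentially — it is what confines $Q$ to $\mathrm{span}\langle U_1,V_1,\dots,U_n,V_n\rangle$ and yields the explicit brackets above — so the statement is asserted for Douglas Heisenberg groups rather than for arbitrary left-invariant Randers metrics on $H_{2n+1}$.
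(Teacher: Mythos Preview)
Your proposal is correct and follows exactly the paper's approach: the corollary is immediate from the preceding theorem (no Douglas Heisenberg group is geodesic orbit) together with the cited fact from \cite{Deng} that every weakly symmetric Finsler space is geodesic orbit. The paper in fact gives no separate proof beyond stating this implication, so your write-up is, if anything, more explicit than the original.
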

In the rest of this section, we will restrict our attention to the geometry of $Z$-Randers metrics on the Heisenberg group $H_{2n+1}$.
\begin{theorem}
For every $Z$-Randers metric on the Heisenberg group $H_{2n+1}$ the flag curvature of the Chern-Rund
connection with flag pole $W=\dfrac{1}{\sqrt{\lambda}}Z$ is given by the following equation.
\[
K(\Pi ,W)=K(\Pi)=\dfrac{\lambda}{4}\dfrac{\sum_{i=1}^n \dfrac{(a_i^2 +b_i^2)}{\sigma_i}}{\sum_{i=1}^n (a_i^2 +b_i^2)\sigma_i},
\]
where, $\Pi=span \{U,W\}$ and $U=\sum_{i=1}^n a_i U_i+b_iV_i$.
\end{theorem}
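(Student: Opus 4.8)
The plan is to reduce the Finslerian flag curvature to a Riemannian sectional curvature. Write the defining vector field of the $Z$-Randers metric as $Q=cZ$ with $c^{2}\lambda<1$, and set $\lambda_{0}=1-c^{2}\lambda>0$. Two structural facts will be used repeatedly. First, $Z$ is central in $\mathbb{H}_{2n+1}$, so $\nabla_{Z}Z=0$, and from Theorem~\ref{levi} together with $\langle U_{i},U_{i}\rangle=\langle V_{i},V_{i}\rangle=\sigma_{i}$ one checks $\langle\nabla_{X}Z,Y\rangle+\langle\nabla_{Y}Z,X\rangle=0$ for all $X,Y$; that is, $Q$ is a Killing field of $g$, so by the $S$-curvature formula of Section~2 the $Z$-Randers metric has vanishing $S$-curvature. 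Second, $U=\sum_{i}(a_{i}U_{i}+b_{i}V_{i})$ is $g$-orthogonal to $Z$, hence to the flag pole $W=\lambda^{-1/2}Z$, which is itself a $g$-unit vector. The two quantities to be produced are the osculating quadratic form $g_{W}$ (for the denominator of $K(\Pi,W)$) and the Chern--Rund curvature term $g_{W}(R(U,W)W,U)$ (for the numerator).

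For the denominator I would use the standard formula for the fundamental tensor of a Randers metric $F=\alpha+\beta$, with $\alpha=\sqrt{g(y,y)}$ and $\beta=g(Q,y)$,
\[
g_{Y}(X,X)=\frac{F(Y)}{\alpha(Y)}\Big(g(X,X)-\frac{g(Y,X)^{2}}{\alpha(Y)^{2}}\Big)+\Big(\frac{g(Y,X)}{\alpha(Y)}+g(Q,X)\Big)^{2},
\]
and evaluate it at $Y=W$. Since $\alpha(W)=1$, $\beta(W)=c\sqrt{\lambda}$, $F(W)=1+c\sqrt{\lambda}>0$, and $g(W,U)=g(Q,U)=0$, all cross terms drop out and one gets $g_{W}(W,W)=F(W)^{2}$, $g_{W}(U,U)=F(W)\,g(U,U)$ and $g_{W}(U,W)=0$, so the denominator of $K(\Pi,W)$ equals $F(W)^{3}g(U,U)$.

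The numerator is the substantive part, and here the choice of flag pole along the central Killing direction is decisive. I would exhibit $F$ as the solution of the Zermelo navigation problem for the data $(h,\widetilde W)$, where $h=\lambda_{0}\big(g-\beta\otimes\beta\big)$ is a genuine left-invariant metric (since $g(Q,X)^{2}\le c^{2}\lambda\,g(X,X)<g(X,X)$) and $\widetilde W=-\lambda_{0}^{-1}Q$; this is the standard conversion between the $\alpha+\beta$ and navigation descriptions of a Randers metric. Because $\widetilde W$ is a multiple of the central element $Z$, it is a Killing field of $h$, so by the navigation comparison for Killing winds (see \cite{Bao}) the flag curvature of $F$ with flag pole along $Z$ equals the $h$-sectional curvature of $\mathrm{span}\{U,Z\}$; equivalently $g_{W}(R(U,W)W,U)=F(W)^{3}g(U,U)\cdot K^{h}\big(\mathrm{span}\{U,Z\}\big)$, so that $K(\Pi,W)=K^{h}\big(\mathrm{span}\{U,Z\}\big)$. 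One may instead compute $R(U,W)W$ directly from the spray coefficients of $F$ and the Chern--Rund connection, as is done in \cite{Kovacs} for the five-dimensional case; obtaining this curvature term is the step I expect to be the main obstacle, and the navigation picture is what I would use to bypass it.

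It remains to compute $K^{h}\big(\mathrm{span}\{U,Z\}\big)$, which is a Koszul computation entirely parallel to Theorem~\ref{levi} and the curvature formulas of Section~3, carried out for the metric $h$. The commutators are unchanged, $[U_{i},V_{i}]=Z$, while now $h(U_{i},U_{i})=h(V_{i},V_{i})=\lambda_{0}\sigma_{i}$ and $h(Z,Z)=\lambda_{0}^{2}\lambda$. The same steps give $R^{h}(U_{i},Z)Z=\dfrac{h(Z,Z)^{2}}{4\,h(U_{i},U_{i})^{2}}U_{i}$ and likewise for $V_{i}$, hence
\[
K^{h}\big(\mathrm{span}\{U,Z\}\big)=\frac{h(R^{h}(U,Z)Z,U)}{h(U,U)\,h(Z,Z)}=\frac{h(Z,Z)}{4}\cdot\frac{\sum_{i}(a_{i}^{2}+b_{i}^{2})/h(U_{i},U_{i})}{\sum_{i}(a_{i}^{2}+b_{i}^{2})\,h(U_{i},U_{i})}.
\]
Substituting $h(U_{i},U_{i})=\lambda_{0}\sigma_{i}$ and $h(Z,Z)=\lambda_{0}^{2}\lambda$, the factors of $\lambda_{0}$ cancel and one is left with $\dfrac{\lambda}{4}\cdot\dfrac{\sum_{i}(a_{i}^{2}+b_{i}^{2})/\sigma_{i}}{\sum_{i}(a_{i}^{2}+b_{i}^{2})\sigma_{i}}$. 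Since this expression also coincides with the $g$-sectional curvature of $\mathrm{span}\{U,Z\}$ computed from the curvature formulas of Section~3, the asserted equality $K(\Pi,W)=K(\Pi)$ follows, together with the fact that the value is independent of the Randers parameter $c$.
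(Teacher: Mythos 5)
Your argument is correct and reaches the stated formula, but it takes a genuinely different route from the paper for the decisive step. The paper follows Rademacher's method (as in \cite{lengy}): it writes down the osculating inner product $\langle\cdot,\cdot\rangle_W$ (your denominator computation agrees with its table, since $F(W)=1+\xi$ with $\xi=c\sqrt{\lambda}$), then computes the Chern--Rund connection coefficients $\overline{\nabla}$ and the curvature $\overline{R}(U_i,W)W=\frac{\lambda}{4\sigma_i^2}(1+\xi)^2U_i$ directly, and assembles the quotient. You instead bypass the Chern--Rund computation entirely by passing to the Zermelo navigation data $(h,\widetilde W)$ with $h=\lambda_0(g-\beta\otimes\beta)$ and $\widetilde W=-Q/\lambda_0$; since $\widetilde W$ is a multiple of the central element $Z$ it is Killing for the left-invariant metric $h$, and because the flag pole is also along $Z$ the transformed flag plane coincides with $\mathrm{span}\{U,Z\}$, so the Killing-wind comparison reduces everything to a Riemannian sectional curvature of $h$, which the Section~3 machinery evaluates; the $\lambda_0$ factors cancel and the result agrees. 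Your route has the merit of explaining structurally why the answer is independent of $\xi$ and equals the $g$-sectional curvature of $\mathrm{span}\{U,Z\}$, whereas the paper's direct computation of $\overline{\nabla}$ is reusable for flags whose pole is not along the wind (it is needed verbatim in the next theorem, with $W=U_1$, where your navigation shortcut would not apply). Two small cautions: the Killing-wind flag-curvature comparison is not in \cite{Bao} (which predates it); the appropriate source is the Bao--Robles chapter in the same MSRI volume as \cite{Bert}, or Huang--Mo. And you should state explicitly that the comparison preserves the flag plane only because both the pole and the wind lie along $Z$ here; for a general flag pole the transformed plane differs and the reduction fails.
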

\begin{proof}
Suppose that $F(a,v)=\sqrt{g(v,v)}+g(Q(a),v)$ is a $Z$-Randers metric on $H_{2n+1}$ with $Q=\xi W=\dfrac{\xi}{\sqrt{\lambda}}Z$ for a real number $0<\xi <1$. Using method described in (\cite{Bert}, Theorem 3.10) and applied in \cite{lengy} we calculate the osculating metric and the Chern-Rund connection directly. One can easily check the following relations.
\begin{align*}
\langle U_i, U_j \rangle_W &= (1+\xi)\delta_{ij}\sigma_i ,\qquad
\langle V_i, V_j \rangle_W =(1+\xi)\delta_{ij}\sigma_i ,\\
\langle U_i, V_j \rangle_W &=0 ,\qquad \qquad\qquad
\langle U_i, W \rangle_W =0,\\
\langle V_i, W \rangle_W &=0 ,\qquad \qquad\qquad
\langle W, W \rangle_W =(1+\xi )^2.\\
\end{align*}
The local components of the Chern-Rund connection associated to the osculating metric $\langle .,.\rangle_W$ with respect to the basis $\{U_i ,V_i ,W\}_{i=1}^n$ which is denoted by $\overline{\nabla}$ are given by the following formulas.
\begin{align*}
&\overline{\nabla}_{U_i} V_i =\dfrac{\sqrt{\lambda}}{2}W,\qquad\qquad \overline{\nabla}_{V_i} U_i=-\dfrac{\sqrt{\lambda}}{2}W,\\
&\overline{\nabla}_{U_i} W =-\dfrac{\sqrt{\lambda}}{2\sigma_i}(1+\xi)V_i,\qquad\quad \overline{\nabla}_W {U_i} =-\dfrac{\sqrt{\lambda}}{2\sigma_i}(1+\xi)V_i,\\
&\overline{\nabla}_{V_i} W =\dfrac{\sqrt{\lambda}}{2\sigma_i}(1+\xi)U_i,\qquad\qquad \overline{\nabla}_W {V_i} =\dfrac{\sqrt{\lambda}}{2\sigma_i}(1+\xi)U_i.
\end{align*}
According to the  above relations, for the Riemannian curvature of the Chern-Rund connection $\overline{\nabla}$ denoted by $\overline{R}$, we have
\[
\overline{R}(U_i,W)(W)=\dfrac{\lambda}{4\sigma_i^2}(1+\xi)^2 U_i, \qquad \overline{R}(V_i,W)(W)=\dfrac{\lambda}{4\sigma_i^2}(1+\xi)^2 V_i.
\]
Applying the above computations, we find that
\begin{align*}
 K(\Pi ,W)&=\dfrac{\langle \overline{R}(U_i,W)W,U_i\rangle_W}{ \langle W,W\rangle_W \langle U_i,U_i\rangle_W -\langle W,U_i\rangle_W^2}=\dfrac{\lambda}{4\sigma_i^2},\\
 K(P ,W)&=\dfrac{\langle \overline{R}(V_i,W)W,V_i\rangle_W}{ \langle W,W\rangle_W \langle V_i,V_i\rangle_W -\langle W,V_i\rangle_W^2}=\dfrac{\lambda}{4\sigma_i^2},
\end{align*}
where, $\Pi=span \{U_i,W\}$ and $P=span \{V_i,W\}$. Now, let $U=\sum_{i=1}^n a_i U_i+b_iV_i$ and\newline $\Pi=span \{U,W\}$, then easy calculations show
\[K(\Pi ,W)=K(\Pi)=\dfrac{\lambda}{4}\dfrac{\sum_{i=1}^n \dfrac{(a_i^2 +b_i^2)}{\sigma_i}}{\sum_{i=1}^n (a_i^2 +b_i^2)\sigma_i}.
\]
\end{proof}
The above theorem shows that the flag curvature of every $Z$-Randers metric on the Heisenberg group $H_{2n+1}$ with flag pole $W=\dfrac{1}{\sqrt{\lambda}}Z$ is strictly positive. 
\begin{theorem}
For every $Z$-Randers metric on the Heisenberg group $H_{2n+1}$ the flag curvature of the Chern-Rund
connection with flag pole $W=U_1$ is given by the following equations.
\begin{align*}
 K(\Pi ,W)&=\dfrac{\xi^2 {\lambda}}{4\sigma_2},\\
 K(P ,W)&=\dfrac{\lambda^2 (1+\xi^2)^2}{4\sigma_1^2 \lambda (1+\xi^2(1-\sigma_1^2))},
\end{align*}
where, $\Pi=span \{U_2,W\}$ and $P=span \{Z,W\}$. 
\end{theorem}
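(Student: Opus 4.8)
The plan is to run exactly the scheme of the previous theorem: first compute the osculating Riemannian metric $\langle\cdot,\cdot\rangle_W$ at the chosen flag pole, then the Chern--Rund connection $\overline\nabla$ of that osculating metric, then its curvature $\overline R$, and finally substitute into the flag curvature formula
\[
K(\Pi ,W)=\frac{\langle \overline R(X,W)W,X\rangle_W}{\langle W,W\rangle_W\langle X,X\rangle_W-\langle W,X\rangle_W^{2}},
\]
once with $X=U_2$ and once with $X=Z$. The essential new feature, compared with the previous theorem, is that the flag pole $W=U_1$ lies in the "horizontal" part of $\mathbb H_{2n+1}$ and is orthogonal to the underlying $Z$-Randers vector field, rather than parallel to it.

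First I would record the data of the metric. Since $F$ is a $Z$-Randers metric, its underlying vector field is $Q=\xi W_0=\frac{\xi}{\sqrt\lambda}Z$ for some $0<\xi<1$, so the defining $1$-form is $\beta=g(Q,\cdot)=\xi\sqrt\lambda\,w$, where $w$ is the covector dual to $Z$, and $|\beta|_g=\xi<1$. At the flag pole one has $\beta(U_1)=g(Q,U_1)=0$ and $\alpha(U_1)=\sqrt{g(U_1,U_1)}=\sqrt{\sigma_1}$, hence $F(e,U_1)=\sqrt{\sigma_1}$. Next, using the standard Randers formula $g_{ij}(y)=\frac{F}{\alpha}\bigl(a_{ij}-\frac{y_iy_j}{\alpha^{2}}\bigr)+\bigl(\frac{y_i}{\alpha}+b_i\bigr)\bigl(\frac{y_j}{\alpha}+b_j\bigr)$ evaluated at $y=W=U_1$, I would compute the osculating metric. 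Because $W\perp_g Q$, the rank-one correction no longer merely rescales $g$ (as it did in the previous theorem, where the flag pole was parallel to $Q$); instead it genuinely mixes the $U_1$ and $Z$ directions. A short computation should give
\[
\langle U_i,U_i\rangle_W=\langle V_i,V_i\rangle_W=\sigma_i,\qquad \langle Z,Z\rangle_W=\lambda(1+\xi^{2}),\qquad \langle U_1,Z\rangle_W=\xi\sqrt{\sigma_1\lambda},
\]
with all remaining pairings zero. This is the step that makes the present statement more delicate than its predecessor.

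Then, by the method of \cite{Bert} (Theorem 3.10) already invoked above and applied in \cite{lengy}, I would compute the local components of the Chern--Rund connection $\overline\nabla$ of $\langle\cdot,\cdot\rangle_W$ in the basis $\{U_i,V_i,Z\}$, and from these the curvature operators $\overline R(U_2,W)W$ and $\overline R(Z,W)W$ with $W=U_1$. Substituting into the flag curvature formula for $\Pi=\mathrm{span}\{U_2,W\}$ and $P=\mathrm{span}\{Z,W\}$ and simplifying then yields the two stated values; in the second one the denominator of the fraction carries the combination $\langle W,W\rangle_W\langle Z,Z\rangle_W-\langle W,Z\rangle_W^{2}$ together with the curvature normalisation, which is where the factor $1+\xi^{2}(1-\sigma_1^{2})$ enters.

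The main obstacle is precisely the osculating-metric and connection step. Unlike the case of a $Z$-aligned flag pole treated before, here the cross term $\langle U_1,Z\rangle_W\neq0$ propagates through the Koszul-type computation for $\overline\nabla$, so $\overline\nabla$ is no longer a simple $(1+\xi)$-rescaling of the Levi-Civita data of Theorem \ref{levi}; in particular $\overline R(U_2,W)W$, which vanishes identically in the Riemannian case (cf. $R(U_i,U_k)U_k=0$), now acquires an $O(\xi^{2})$ term, and this is what produces $K(\Pi,W)=\frac{\xi^{2}\lambda}{4\sigma_2}$. Keeping these corrections straight through the curvature expansion — especially for the flag $P$ involving the centre direction $Z$ — is the bulk of the work; once $\overline\nabla$ and $\langle\cdot,\cdot\rangle_W$ are in hand the two flag-curvature values follow by direct substitution.
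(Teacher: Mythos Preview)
Your proposal follows exactly the paper's scheme: compute the osculating metric $\langle\cdot,\cdot\rangle_W$ at $W=U_1$, derive the Chern--Rund connection $\overline\nabla$ by the method of \cite{Bert} (as in \cite{lengy}), obtain $\overline R(U_2,W)W$ and $\overline R(Z,W)W$, and substitute into the flag-curvature quotient --- including your observation that the new cross term $\langle U_1,Z\rangle_W\neq 0$ is precisely what generates the $O(\xi^{2})$ contribution to $K(\Pi,W)$, which would vanish in the Riemannian case. The only point of divergence is numerical rather than structural: the paper records the cross term as $\langle U_1,Z\rangle_W=\xi\sqrt{\lambda}\,\sigma_1$ rather than your $\xi\sqrt{\sigma_1\lambda}$, and it is that value which is carried through to the stated denominator of $K(P,W)$.
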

\begin{proof}
Suppose that $F(a,v)=\sqrt{g(v,v)}+g(Q(a),v)$ is a $Z$-Randers metric on $H_{2n+1}$ with $Q=\dfrac{\xi}{\sqrt{\lambda}}Z$ for a real number $0<\xi <1$. If $W=U_1$ then,  one can easily check the following relations.
\begin{align*}
\langle U_i, U_j \rangle_W &=\delta_{ij} \sigma_i  ,\qquad
\langle V_i, V_j \rangle_W =\delta_{ij} \sigma_i ,\\
\langle U_i, V_j \rangle_W &=0 ,\qquad \quad
\langle U_i, Z \rangle_W =\xi\sqrt{\lambda} \delta_{1i}\sigma_i ,\\
\langle V_i, Z \rangle_W &=0 ,\qquad 
\langle Z, Z \rangle_W =\lambda (1+ \xi^2).\\
\end{align*}
The local components of the Chern-Rund connection associated to the osculating metric $\langle .,.\rangle_W$ with respect to the basis $\{U_i ,V_i ,Z\}_{i=1}^n$ which is denoted by $\overline{\nabla}$ are given by the following formulas.
\begin{align*}
&\overline{\nabla}_{U_i} V_j =\dfrac{1}{2}\delta_{ij}Z + \dfrac{\xi \sqrt{\lambda}}{2}\sum_{k=1}^n (\delta_{ij}\delta_{1k}+\delta_{1i}\delta_{jk}\dfrac{\sigma_i}{\sigma_k})U_k=\delta_{ij}Z+\overline{\nabla}_{V_j} U_i,\\
&\overline{\nabla}_{U_i} U_j=-\dfrac{\xi \sqrt{\lambda}}{2}\sum_{k=1}^n (\delta_{1i} \delta_{jk}\dfrac{\sigma_i}{\sigma_k} +\delta_{1j}\delta_{ik}\dfrac{\sigma_j}{\sigma_k})V_k =\overline{\nabla}_{U_j} {U_i},\\
&\overline{\nabla}_{U_i} Z=-\dfrac{\lambda}{2\sigma_i}(1+\xi^2)V_i=\overline{\nabla}_{Z} U_i,\\
&\overline{\nabla}_{V_i} Z=\dfrac{\lambda}{2\sigma_i}(1+\xi^2)U_i=\overline{\nabla}_{Z}V_i,
\end{align*}
According to the  above relations, for the Riemannian curvature of the Chern-Rund connection $\overline{\nabla}$ denoted by $\overline{R}$, we have
\begin{align*}
&\overline{R}(U_2,W)(W)=\dfrac{\xi^2 {\lambda}}{4}\dfrac{\sigma_1}{\sigma_2} U_2, \\
&\overline{R}(Z,W)(W)=\dfrac{\lambda}{4\sigma_1}(1+\xi^2)Z.
\end{align*}
Applying the above computations, we find that
\begin{align*}
 K(\Pi ,W)&=\dfrac{\langle \overline{R}(U_2,W)W,U_2\rangle_W}{ \langle W,W\rangle_W \langle U_2,U_2\rangle_W -\langle W,U_2\rangle_W^2}=\dfrac{\xi^2 {\lambda}}{4\sigma_2},\\
 K(P ,W)&=\dfrac{\langle \overline{R}(Z,W)W,Z\rangle_W}{ \langle W,W\rangle_W \langle Z,Z\rangle_W -\langle W,Z\rangle_W^2}=\dfrac{\lambda^2 (1+\xi^2)^2}{4\sigma_1^2 \lambda (1+\xi^2(1-\sigma_1^2))},
\end{align*}
where, $\Pi=span \{U_2,W\}$ and $P=span \{Z,W\}$. 
\end{proof}
\begin{remark}
Considering some special cases of $\xi$ and $\sigma_1$, the above theorem shows that there exist flags of strictly negative and strictly positive curvatures on Heisenberg groups.
\end{remark}
\section{Conclusion}

In this paper, we investigated the geometry of left-invariant Randers metrics on the Heisenberg group $H_{2n+1}$, of dimension $2n+1$. We determined the Levi-Civita connection, curvature tensor, Ricci tensor and scalar curvature of a left-invariant metric on  $H_{2n+1}$ and showed  the Heisenberg groups $H_{2n+1}$  have constant negative scalar curvature. Other geometric properties of such spaces are investigated. Also, we showed the Heisenberg group $H_{2n+1}$ can not admit Randers metric of Berwald and Ricci-quadratic Douglas types. Finally, by computing flag curvature it was shown that there exist flags of strictly negative and strictly positive curvatures. \\
The question of whether the Heisenberg group $H_{2n+1}$ admits a Randers metric of general Dauglus type, is not considered in this paper. Also, we have computed the flag curvature of a special kind of Randers metric (namely $Z$-Randers metric) on $H_{2n+1}$ and giving an explicit formula for computing flag curvature in general case would be a matter of another paper.  

\end{document}